\renewcommand{\eqref}[1]{\hyperref[#1]{(\ref{#1})}}
\newlist{enumlist}{enumerate}{2}
\setlist[enumlist,1]{labelindent=0cm,label=(\roman*),ref=(\roman*),labelwidth=4.5ex,labelsep=1ex,leftmargin=5.5ex,align=right,topsep=0.5ex,itemsep=1ex,parsep=1ex}
\setlist[enumlist,2]{labelindent=0cm,label=\alph*),ref=\arabic*,labelwidth=5ex,labelsep=0.5ex,leftmargin=5.5ex,align=left,topsep=0.5ex,itemsep=1ex,parsep=1ex}
\newlist{enumlistI}{enumerate}{1}
\setlist[enumlistI]{labelindent=0cm,label=(\Roman*),ref=(\Roman*),labelwidth=4.5ex,labelsep=1ex,leftmargin=5.5ex,align=right,topsep=0.5ex,itemsep=1ex,parsep=1ex}
\newlist{enumlistprime}{enumerate}{1}
\setlist[enumlistprime]{labelindent=0cm,label=(\roman*)$^{\prime}$,ref=(\roman*)$^{\prime}$,labelwidth=4.5ex,labelsep=1ex,leftmargin=5.5ex,align=right,topsep=0.5ex,itemsep=1ex,parsep=1ex}
\newlist{enumlistprimeprime}{enumerate}{1}
\setlist[enumlistprimeprime]{labelindent=0cm,label=(\roman*)$^{\prime\prime}$,ref=(\roman*)$^{\prime\prime}$,labelwidth=4.5ex,labelsep=1ex,leftmargin=5.5ex,align=right,topsep=0.5ex,itemsep=1ex,parsep=1ex}
\newlist{enumlistprimeprimeprime}{enumerate}{1}
\setlist[enumlistprimeprimeprime]{labelindent=0cm,label=(\roman*)$^{\prime\prime\prime}$,ref=(\roman*)$^{\prime\prime\prime}$,labelwidth=4.5ex,labelsep=1ex,leftmargin=5.5ex,align=right,topsep=0.5ex,itemsep=1ex,parsep=1ex}
\newlist{enuma}{enumerate}{1}
\setlist[enuma]{labelindent=0cm,label=(\alph*),ref=(\alph*),labelwidth=4.5ex,labelsep=1ex,leftmargin=5.5ex,align=right,topsep=0.5ex,itemsep=1ex,parsep=1ex}
\newlist{itemlist}{itemize}{1}
\setlist[itemlist]{labelindent=0cm,label=$\bullet$,labelwidth=2.5ex,labelsep=0.5ex,leftmargin=3ex,align=left,topsep=0.5ex,itemsep=1ex,parsep=1ex}
\theoremstyle{definition}\newtheorem{definitiona}{Definition}%[section]
\newtheorem{remarka}[definitiona]{Remark}
\newtheorem{examplea}[definitiona]{Example}}
\newtheorem{propositiona}[definitiona]{Proposition}
\newtheorem{lemmaa}[definitiona]{Lemma}
\newtheorem{theorema}[definitiona]{Theorem}
\newtheorem{corollarya}[definitiona]{Corollary}
\newtheorem{letterthma}{Theorem}
\renewcommand{\theletterthma}{\Alph{letterthma}}
\newtheorem{letterpropa}[letterthma]{Proposition}
\theoremstyle{definition}
\newtheorem{letterremarka}[letterthma]{Remark}
\renewcommand{\theletterremarka}{\Alph{letterthma}}
\newenvironment{lemma}[1][]{\begin{lemmaa}[#1]\setlist*[enumlist,1]{label=(\roman*),ref=\thelemmaa(\roman*)}\setlist*[enuma]{label=(\alph*),ref=\thelemmaa(\alph*)}}{\end{lemmaa}}
\newenvironment{letterthm}[1][]{\begin{letterthma}[#1]\setlist*[enumlist,1]{label=(\roman*),ref=\theletterthma(\roman*)}\setlist*[enuma]{label=(\alph*),ref=\theletterthma(\alph*)}}{\end{letterthma}}
\newenvironment{letterprop}[1][]{\begin{letterpropa}[#1]\setlist*[enumlist,1]{label=(\roman*),ref=\theletterthma(\roman*)}\setlist*[enuma]{label=(\alph*),ref=\theletterthma(\alph*)}}{\end{letterpropa}}
\newcommand{\C}{\mathbb{C}}
\newcommand{\Z}{\mathbb{Z}}
\newcommand{\N}{\mathbb{N}}
\newcommand{\R}{\mathbb{R}}
\newcommand{\T}{\mathbb{T}}
\newcommand{\Q}{\mathbb{Q}}
\newcommand{\cZ}{\mathcal{Z}}
\newcommand{\cG}{\mathcal{G}}
\newcommand{\cU}{\mathcal{U}}
\newcommand{\al}{\alpha}
\newcommand{\be}{\beta}
\newcommand{\vphi}{\varphi}
\newcommand{\om}{\omega}
\newcommand{\Om}{\Omega}
\newcommand{\Ad}{\operatorname{Ad}}
\newcommand{\Aut}{\operatorname{Aut}}
\newcommand{\SL}{\operatorname{SL}}
\newcommand{\alhat}{\widehat{\alpha}}
\newcommand{\Shat}{\widehat{S}}
\newcommand{\That}{\widehat{T}}
\newcommand{\ot}{\otimes}
\newcommand{\id}{\mathord{\text{\rm id}}}
\newcommand{\ovt}{\mathbin{\overline{\otimes}}}
\newcommand{\actson}{\curvearrowright}
\begin{document}

\begin{center}
{\boldmath\LARGE\bf Factoriality of twisted locally compact\vspace{0.5ex}\\
group von Neumann algebras}

\vspace{1ex}

{\sc by Stefaan Vaes\footnote{KU~Leuven, Department of Mathematics, Leuven (Belgium), stefaan.vaes@kuleuven.be\\ Supported by FWO research project G016325N of the Research Foundation Flanders and by Methusalem grant METH/21/03 –- long term structural funding of the Flemish Government.}}
\end{center}

\begin{abstract}\noindent
In this short note, we construct an exotic example of a locally compact group $\cG$ with a Borel $2$-cocycle $\om$ such that the non-twisted group von Neumann algebra $L(\cG)$ is a factor, while the twisted group von Neumann algebra $L_\om(\cG)$ has a diffuse center.
\end{abstract}

\section{Introduction}

When $\Gamma$ is a discrete group, it was already proven by Murray and von Neumann that the group von Neumann algebra $L(\Gamma)$ is a factor if and only if every nontrivial conjugacy class of $\Gamma$ is infinite. In that case, every $2$-cocycle twisted group von Neumann algebra $L_\om(\Gamma)$ automatically is a factor as well. Note that in general, \cite{Kle61} provides a criterion for factoriality of arbitrary twisted group von Neumann algebras $L_\om(\Gamma)$ of discrete groups.

For locally compact groups $\cG$, there is no group-theoretic criterion for the factoriality of its group von Neumann algebra $L(\cG)$. In this short note, we give examples where $L(\cG)$ is a factor, while certain $2$-cocycle twists $L_\om(\cG)$ have a diffuse center. These counterintuitive examples provide strong evidence that it is very unlikely that one can give an intrinsic characterization for the factoriality of twisted group von Neumann algebras in the locally compact case.

\begin{letterthm}\label{thm.main}
Let $p$ be an odd prime number and define the locally compact group $G$ as the semidirect product $G = \Q_p^2 \rtimes \SL_2(\Q)$, with compact open subgroup $K = \Z_p^2$. Define $\cG$ as the restricted product $\cG = \prod_{k \in \N}' (G,K)$. Then $\cG$ admits a Borel $2$-cocycle $\om$ such that $L(\cG)$ is a factor and $L_\om(\cG)$ has a diffuse center.
\end{letterthm}

Our initial motivation to search for the examples in Theorem \ref{thm.main} was however different. Recently in \cite{DCK24}, a theory of braided tensor products of von Neumann algebras was developed, where the braiding comes from actions of locally compact quantum groups. There then arose in \cite{DCK24} the natural question if the braided tensor product of two factors is always a factor. The answer turns out to be no, and to give a counterexample, it suffices to construct an action $\cG \actson^\al A$ of a locally compact group $\cG$ on a von Neumann algebra $A$ such that both $L(\cG)$ and $A$ are factors, while the crossed product $A \rtimes_\al \cG$ is not a factor. By Theorem \ref{thm.main}, such actions indeed exist and this is used in the proof of \cite[Corollary 8.5]{DCK24}. Such actions even exist with $A = B(K)$ and $\al$ an inner action.

These phenomena only happen in the nondiscrete case, as we show in the second result of this note.

\begin{letterprop}\label{prop.main-cor}
Let $G \actson^\al A$ be any continuous action of a locally compact group $G$ on a von Neumann algebra $A$. Assume that both $L(G)$ and $A$ are factors. If $G$ is discrete, then $A \rtimes_\alpha G$ is a factor. If $G$ is not discrete, this does not always hold.
\end{letterprop}

\section{Proof of Theorem \ref{thm.main}}

We deduce Theorem \ref{thm.main} from a series of lemmas. Lemma \ref{lem.generic-1} provides a generic construction of an action $\cG \actson M$ of a locally compact group $\cG$ on a von Neumann algebra $M$ such that both $L(\cG)$ and $M \rtimes \cG$ have other crossed product descriptions that will allow us to give examples where $L(\cG)$ is a factor, while $M \rtimes \cG$ has a diffuse center. Moreover, as we prove in Lemma \ref{lem.generic-2}, we can give such examples with $M \cong B(K)$ for a separable Hilbert space $K$, so that $M \rtimes \cG \cong L_\om(\cG) \ovt B(K)$ for some Borel $2$-cocycle $\om \in Z^2(\cG,\T)$, by Lemma \ref{lem.twisted}.

Throughout this section, we abbreviate \emph{locally compact second countable} as \emph{lcsc}.

\begin{lemma}\label{lem.generic-1}
Let $G$ be a lcsc group and $G \actson^\al S$ a continuous action of $G$ by automorphisms of a lcsc abelian group $S$. Consider the action $G \actson^{\alhat} \Shat$ on the Pontryagin dual given by $\alhat_g(\om) = \om \circ \al_g^{-1}$. Let $\be$ be a continuous action of the semidirect product $S \rtimes_\al G$ on a von Neumann algebra $A$. Define the semidirect product group $\cG = \Shat \rtimes_{\alhat} G$ and crossed product von Neumann algebra $M = A \rtimes_\be S$.

Then, $\cG$ admits a continuous action on $M$ such that $M \rtimes \cG \cong (A \rtimes_\be G) \ovt B(L^2(S))$. Also, $L(\cG) \cong L^\infty(S) \rtimes_\al G$.
\end{lemma}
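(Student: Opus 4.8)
The plan is to build the $\cG$-action on $M$ out of two pieces dictated by the semidirect product structure $\cG = \Shat \rtimes_{\alhat} G$, and then to compute the crossed product by iterating and invoking Takesaki--Takai duality. First I would define $\gamma$ on the normal subgroup $\Shat$ to be the dual action $\behat$ of the crossed product $M = A \rtimes_\be S$, which fixes $A$ pointwise and scales $\lambda_s \mapsto \om(s)\,\lambda_s$ for $\om \in \Shat$; and on $G$ I would set $\gamma_g(a) = \be_g(a)$ for $a \in A$ and $\gamma_g(\lambda_s) = \lambda_{\al_g(s)}$. That $\gamma_g$ is a well-defined automorphism uses precisely the relation $\be_{\al_g(s)} = \be_g \be_s \be_g^{-1}$, which holds because $\be$ is an action of the whole group $S \rtimes_\al G$. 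A short computation on the generators $a$ and $\lambda_s$ then gives the compatibility $\gamma_g \circ \behat_\om \circ \gamma_g^{-1} = \behat_{\alhat_g(\om)}$, matching the conjugation law $g\om g^{-1} = \alhat_g(\om)$ in $\cG$, so that the two pieces assemble into a single (continuous) action of $\cG$ on $M$.

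Next I would use the iterated-crossed-product decomposition for a semidirect product,
$$M \rtimes_\gamma \cG \;\cong\; \bigl(M \rtimes_{\behat} \Shat\bigr) \rtimes G,$$
where $G$ acts on the inner crossed product via $\gamma|_G$ together with its twisting $\mu_\om \mapsto \mu_{\alhat_g(\om)}$ on the implementing unitaries of the dual action. To the inner crossed product I would apply Takesaki--Takai duality, $(A \rtimes_\be S) \rtimes_{\behat} \Shat \cong A \ovt B(L^2(S))$, under which the bidual $S$-action takes the standard form $\be_s \ot \Ad(\lambda_s)$ with $\lambda$ the left regular representation of $S$.

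The crux, and the step I expect to be the main obstacle, is to track $\gamma|_G$ through this isomorphism. The automorphism $\gamma_g$ acts on $A$ by $\be_g$, twists the $S$-variable by $\al_g$ (through $\lambda_s \mapsto \lambda_{\al_g(s)}$) and simultaneously twists the $\Shat$-variable by $\alhat_g$ (through $\mu_\om \mapsto \mu_{\alhat_g(\om)}$). Since $\lambda_s$ and $\mu_\om$ jointly generate the $B(L^2(S))$ factor produced by duality, I would verify that both twists are implemented by the single Koopman unitary $V_g$ of $\al_g$ on $L^2(S)$, given by $(V_g\xi)(s) = \delta(g)^{1/2}\xi(\al_g^{-1}(s))$ with $\delta(g)$ the modulus of $\al_g$; because $\al$ is an action, $g \mapsto V_g$ is an honest strongly continuous unitary representation of $G$. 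The upshot is that under Takai duality $\gamma_g$ becomes $\be_g \ot \Ad(V_g)$ on $A \ovt B(L^2(S))$.

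Finally, since the $B(L^2(S))$-part of this action is spatially implemented by the unitary representation $V$, I would untwist it: replacing the implementing unitaries $u_g$ of $G$ by $u_g\,(1 \ot V_g^*)$ absorbs the $\Ad(V_g)$ factor into $B(L^2(S))$, and the resulting unitaries still form a genuine representation because $V_gV_h = V_{gh}$; this yields
$$\bigl(A \ovt B(L^2(S))\bigr) \rtimes_{\be \,\ot\, \Ad V} G \;\cong\; (A \rtimes_\be G) \ovt B(L^2(S)),$$
which is the first assertion. For the second assertion I would write $L(\cG) = L(\Shat) \rtimes_{\alhat} G$ and use the Fourier identification $L(\Shat) \cong L^\infty(\widehat{\Shat}) = L^\infty(S)$; checking that $\lambda_\om \in L(\Shat)$ corresponds to the function $s \mapsto \om(s)$ shows that the dual action $\alhat$ becomes the action $\al_g\cdot f = f \circ \al_g^{-1}$ on $L^\infty(S)$, whence $L(\cG) \cong L^\infty(S) \rtimes_\al G$.
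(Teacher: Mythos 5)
Your proposal is correct and follows essentially the same route as the paper: the same two-piece definition of the $\cG$-action (dual action on $\Shat$, the $\be$-twisted action on $G$), the same iterated crossed product decomposition combined with Takesaki duality, the same identification of the $G$-action as $\be_g \ot \Ad V_g$ with $V$ the Koopman representation, the same cocycle untwisting by $1 \ot V_g^*$, and the same Fourier-transform argument for $L(\cG) \cong L^\infty(S) \rtimes_\al G$. The one point to spell out carefully (which the paper handles by writing the duality isomorphism explicitly as $\pi(a)(y) = \be_y(a)$, $\pi(u_x) = 1 \ot \rho_x$, $\pi(v_\om) = 1 \ot \overline{\om}$) is that $A$ sits inside $A \ovt B(L^2(S))$ via the twisted embedding $y \mapsto \be_y(a)$ rather than as $a \ot 1$, so verifying $\gamma_g \leftrightarrow \be_g \ot \Ad V_g$ on that copy of $A$ uses both tensor legs at once, via $\be_g \circ \be_{\al_g^{-1}(y)} = \be_y \circ \be_g$ in $S \rtimes_\al G$.
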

\begin{proof}
We denote by $(u_x)_{x \in S}$ the natural unitary operators in $M = A \rtimes_\be S$. Consider the dual action
$$\Shat \actson^\gamma M : \gamma_\om(a u_x) = \om(x) \, a u_x \quad\text{for all $a \in A$, $x \in S$, $\om \in \Shat$.}$$
We also consider the action
$$G \actson^\gamma M : \gamma_g(a u_x) = \be_g(a) \, u_{\al_g(x)} \quad\text{for all $a \in A$, $x \in S$, $g \in G$,}$$
which is well-defined because $\be_g \in \Aut A$ is an automorphism that conjugates the action $(\be_x)_{x \in S}$ and the action $(\be_{\al_g(x)})_{x \in S}$.

By construction, $\gamma_g \circ \gamma_\om = \gamma_{\alhat_g(\om)} \circ \gamma_g$, so that both actions combine into a continuous action $\cG \actson^\gamma M$. The crossed product $M \rtimes_\gamma \Shat$ is generated by $M$ and unitaries $(v_\om)_{\om \in \Shat}$. By the Takesaki duality theorem, we find a $*$-isomorphism
$$\pi : M \rtimes_\gamma \Shat \to A \ovt B(L^2(S)) : \begin{cases} \pi(a) \in A \ovt L^\infty(S) : \pi(a)(y) = \be_y(a) \quad\text{for $a \in A$, $y \in S$,}\\
\pi(u_x) = 1 \ot \rho_x \quad\text{for $x \in S$, where $(\rho_x \xi)(y) = \xi(x+y)$,}\\
\pi(v_\om) = 1 \ot \overline{\om} \in 1 \ot L^\infty(S) \quad\text{for $\om \in \Shat$.}\end{cases}$$
We have the natural action $G \actson^\zeta M \rtimes_\gamma \Shat$ given by $\zeta_g(d) = \gamma_g(d)$ for all $g \in G$, $d \in M$ and $\zeta_g(v_\om) = v_{\alhat_g(\om)}$ for all $g \in G$, $\om \in \Shat$. By construction, $M \rtimes_\gamma \cG \cong (M \rtimes_\gamma \Shat) \rtimes_\zeta G$.

Since each $\al_g$ is an automorphism of the lcsc group $S$, it scales the Haar measure of $S$ by a constant $\eta(g)$, so that
$$W_g \in \cU(L^2(S)) : (W_g \xi)(y) = \eta(g)^{1/2} \xi(\al_g^{-1}(y)) \quad\text{for $g \in G$, $\xi \in L^2(S)$, $y \in S$,}$$
defines a unitary representation $(W_g)_{g \in G}$ of $G$ on $L^2(S)$.

By construction, $\pi \circ \zeta_g \circ \pi^{-1} = \be_g \ot \Ad W_g$ for all $g \in G$. Since $(1 \ot W_g^*)_{g \in G}$ is a $1$-cocycle for the action $(\be_g \ot \Ad W_g)_{g \in G}$, it follows that
$$(M \rtimes_\gamma \Shat) \rtimes_\zeta G \cong (A \rtimes_\be G) \ovt B(L^2(S)) \; .$$
Finally note that $L(\Shat) \cong L^\infty(S)$ through the Fourier transform, so that $L(\cG) = L(\Shat \rtimes_{\alhat} G) \cong L^\infty(S) \rtimes_\al G$.
\end{proof}

In Lemma \ref{lem.generic-2} below, we provide a more specific variant of the construction in Lemma \ref{lem.generic-1}, in which $M \cong B(K)$, where $K$ is a separable Hilbert space. By the following elementary lemma, which is certainly well-known, it then follows that $M \rtimes \cG \cong L_\om(\cG)$ for some Borel $2$-cocycle $\om \in Z^2(\cG,\T)$. For completeness, we provide a detailed proof.

\begin{lemma}\label{lem.twisted}
Let $G$ be a lcsc group and $G \actson^\al M$ a continuous action on a factor $M$ with separable predual. Assume that $\al_g$ is an inner automorphism for every $g \in G$. There then exists a Borel map $\pi : G \to \cU(M)$ such that $\al_g = \Ad \pi(g)$ for all $g \in G$. Then, $\pi(g)\pi(h) = \Om(g,h) \pi(gh)$ for all $g,h \in G$, where $\Om \in Z^2(G,\T)$ is a Borel $2$-cocycle. There is a unique $*$-isomorphism
$$\theta : A \ovt L_{\overline{\Om}}(G) \to A \rtimes_\al G \quad\text{satisfying}\quad \theta(a \ot \lambda_{\overline{\Om}}(g)) = a \pi(g)^* u_g \quad\text{for all $a \in A$ and $g \in G$,}$$
where $\lambda_{\overline{\Om}}$ is the regular $\overline{\Om}$-representation on $L^2(G)$ that generates $L_{\overline{\Om}}(G)$.
\end{lemma}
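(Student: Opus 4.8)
The plan is to produce the Borel implementers $\pi$ by a measurable selection argument, read off $\Om$ from factoriality, and then establish $\theta$ by conjugating the crossed product, realized on $L^2(G,H)$, by an explicit decomposable unitary. Throughout, $M$ denotes the factor on which $G$ acts (this is the algebra written $A$ in the formula for $\theta$). For the implementers: since $M$ has separable predual, $\cU(M)$ is a Polish group and the quotient $q \colon \cU(M) \to PU(M) := \cU(M)/\T$ by the central circle is a continuous open surjective homomorphism of Polish groups. As $M$ is a factor, the kernel of $u \mapsto \Ad u$ is exactly $\T 1$, so $\Ad$ descends to a continuous injective homomorphism $PU(M) \to \Aut M$ with image $\Inn M$; by the Lusin--Souslin theorem its inverse is Borel on $\Inn M$. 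Continuity of the action makes $g \mapsto \al_g \in \Inn M$ continuous, hence $g \mapsto [\al_g] \in PU(M)$ is Borel, and composing with a Borel section $s$ of $q$ (which exists because $q$ is a continuous open surjection of Polish groups; alternatively one uniformizes the Borel set $\{(g,u) : \Ad u = \al_g\}$, whose fibres are cosets of $\T$) yields a Borel map $\pi = s\circ[\al_{(\cdot)}]$ with $\al_g = \Ad \pi(g)$. This selection step is the one I expect to be the main obstacle; the remainder is computation.

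For the cocycle: since $\al$ is a homomorphism, $\Ad(\pi(g)\pi(h)) = \al_{gh} = \Ad \pi(gh)$, so $\pi(g)\pi(h)\pi(gh)^*$ is a central unitary and hence a scalar $\Om(g,h) \in \T$. Associativity of $\pi(g)\pi(h)\pi(k)$ yields the $2$-cocycle identity, and applying a fixed normal state to the identity $\pi(g)\pi(h)\pi(gh)^* = \Om(g,h)\,1$ shows that $\Om$ is Borel, so $\Om \in Z^2(G,\T)$.

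For the isomorphism: I put $w_g = \pi(g)^* u_g \in M \rtimes_\al G$. Using the covariance relation $u_g x = \al_g(x) u_g$ together with $\al_g = \Ad \pi(g)$, a short computation gives $w_g x = x w_g$ for all $x \in M$, and $w_g w_h = \pi(h)^*\pi(g)^* u_{gh} = \overline{\Om(g,h)}\, w_{gh}$, so $(w_g)_{g \in G}$ is an $\overline{\Om}$-representation commuting with $M$. Next I realize $M \subseteq B(H)$ and $M \rtimes_\al G = (\pi_\al(M) \cup \lambda(G))''$ on $L^2(G,H)$, and introduce the decomposable unitary $W$ given by $(W\xi)(g) = \pi(g)\,\xi(g)$. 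A direct calculation gives $W \pi_\al(x) W^* = 1 \ot x$ for $x \in M$, and, using the cocycle identity in the form $\pi(g)\pi(h^{-1}g)^* = \overline{\Om(h,h^{-1}g)}\,\pi(h)$, also $W \lambda_h W^* = (\lambda_{\overline{\Om}}(h) \ot 1)(1 \ot \pi(h))$, whence $W w_g W^* = \lambda_{\overline{\Om}}(g) \ot 1$, where $\lambda_{\overline\Om}$ is the regular $\overline\Om$-representation on $L^2(G)$.

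Consequently $\Ad W$ carries $M \rtimes_\al G$ onto $L_{\overline{\Om}}(G) \ovt M$, sending $x \mapsto 1 \ot x$ and $w_g \mapsto \lambda_{\overline{\Om}}(g) \ot 1$; in particular the image is exactly the von Neumann algebra generated by $1\ot M$ and $\lambda_{\overline\Om}(G)\ot 1$, which is $L_{\overline\Om}(G)\ovt M$. Composing with the tensor flip produces a $*$-isomorphism $M \ovt L_{\overline{\Om}}(G) \to M \rtimes_\al G$ sending $a \ot 1 \mapsto a$ and $1 \ot \lambda_{\overline{\Om}}(g) \mapsto w_g = \pi(g)^* u_g$; this is the desired $\theta$, and multiplicativity gives $\theta(a \ot \lambda_{\overline{\Om}}(g)) = a\,\pi(g)^* u_g$. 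Uniqueness is immediate, since $\theta$ is normal and the elements $a \ot 1$ and $1 \ot \lambda_{\overline{\Om}}(g)$ generate $M \ovt L_{\overline{\Om}}(G)$.
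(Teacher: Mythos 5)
Your proof is correct. Its computational core coincides with the paper's: the paper realizes $A \rtimes_\al G$ on $L^2(G,H)$ with the same covariant representation, conjugates by the same decomposable unitary $(V\xi)(h) = \pi(h)\xi(h)$, computes $V a u_g V^* = a\pi(g) \ot \lambda_{\overline{\Om}}(g)$, and sets $\theta = \Ad V^*$; your $W$ is this $V$, and the tensor flip you insert merely compensates for writing $L^2(G,H)$ as $L^2(G) \ot H$ rather than $H \ot L^2(G)$. Where you genuinely diverge is the selection step. The paper applies Srivastava's selection theorem to the closed subgroup $P = \{(g,u) \in G \times \cU(M) : \al_g = \Ad u\}$: the projection $P \to G$ is a continuous surjective homomorphism with compact kernel $\{e\} \times \T\cdot 1$, so it admits a Borel right inverse $g \mapsto (g,\pi(g))$, and that is the entire argument. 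You instead factor $\Ad$ through the Polish quotient group $\cU(M)/\T$, invert the resulting continuous injection into $\Aut M$ on its image $\Inn M$ via Lusin--Souslin, and compose with a Borel section of $\cU(M) \to \cU(M)/\T$. This works, but it needs three standard ingredients (that $\cU(M)/\T$ is Polish, that $u \mapsto \Ad u$ is continuous from the strong$^*$ topology on $\cU(M)$ to the $u$-topology on $\Aut M$, and that quotient maps of Polish groups by closed subgroups admit Borel sections), where the paper needs a single citation; note also that your parenthetical alternative --- uniformizing the Borel set $\{(g,u) : \Ad u = \al_g\}$, whose fibres are circle cosets --- is precisely the paper's argument in disguise. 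Your derivation of the cocycle identity, the Borelness of $\Om$ via a fixed normal state, and the uniqueness of $\theta$ fill in details that the paper leaves implicit, and they are all sound.
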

\begin{proof}
Define the closed subgroup $P$ of the Polish group $G \times \cU(M)$ as $P = \{(g,u) \mid \al_g = \Ad u\}$. By assumption, the homomorphism $(g,u) \mapsto g$ restricts to a surjective continuous homomorphism $P \to G$ whose kernel $\{e\} \times \T \cdot 1$ is compact. By e.g.\ \cite[Theorem 1.2]{Sri80}, we can choose a Borel lift $G \to P : g \mapsto (g,\pi(g))$. Since $M$ is a factor and $\al_g \circ \al_h = \al_{gh}$, we find that $\pi(g)\pi(h) = \Om(g,h) \pi(gh)$ for all $g,h \in G$, where $\Om \in Z^2(G,\T)$ is a Borel $2$-cocycle.

Assume that $A \subset B(H)$, equip $G$ with a left Haar measure and realize $A \rtimes_\al G$ as the von Neumann algebra acting on $L^2(G,H)$ generated by
$$(a \cdot \xi)(h) = \al_{h^{-1}}(a) \xi(h) \quad\text{and}\quad (u_g \cdot \xi)(h) = \xi(g^{-1}h) \quad\text{for all $a \in A$, $g,h \in G$.}$$
Define the unitary operator $V$ on $L^2(G,H)$ by $(V \xi)(h) = \pi(h) \xi(h)$. Identifying $L^2(G,H) = H \ot L^2(G)$, one computes that
$$V a u_g V^* = a \pi(g) \otimes \lambda_{\overline{\Om}}(g) \quad\text{for all $a \in A$, $g \in G$.}$$
So, we can define $\theta = \Ad V^*$.
\end{proof}

For the following lemma, recall that for an abelian group $S$, a map $\Om : S \times S \to \T$ is called a bicharacter if $\Om(\cdot,y)$ and $\Om(x,\cdot)$ are homomorphisms from $S$ to $\T$ for all $x,y \in S$.

\begin{lemma}\label{lem.generic-2}
Let $G \actson^\al S$ be a continuous action of a lcsc group by automorphisms of a lcsc abelian group $S$. Let $\Om : S \times S \to \T$ be a continuous bicharacter satisfying $\Om(\al_g(x),\al_g(y)) = \Om(x,y)$ for all $x,y \in S$, $g \in G$.

Assume that there exists a lcsc abelian group $T$ and an isomorphism $\theta : T \times \That \to S$ such that $T \mapsto T : y \mapsto 2 y$ is a homeomorphism of $T$ onto $T$, and such that
$$\Om(\theta(y,\vphi),\theta(y',\vphi')) = \vphi'(y) \, \overline{\vphi(y')} \quad\text{for all $y,y' \in T$, $\vphi,\vphi' \in \That$.}$$
Then the semidirect product group $\cG = \Shat \rtimes_{\alhat} G$ admits a Borel $2$-cocycle $\om \in Z^2(\cG,\T)$ such that
$$\cZ(L_\om(\cG)) \cong \cZ(L(G)) \quad\text{while}\quad \cZ(L(\cG)) \cong \cZ(L^\infty(S) \rtimes_\al G) \; .$$
\end{lemma}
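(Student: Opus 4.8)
The plan is to realize $L_\om(\cG)$ and $L(\cG)$ as crossed products via the machinery of Lemmas \ref{lem.generic-1} and \ref{lem.twisted}, and then to compute the two centers separately. The key idea is that the bicharacter $\Om$, which is invariant under the $G$-action, should be used to manufacture an inner action of $S$ on some $B(K)$, so that Lemma \ref{lem.generic-1} applies with $A = B(K)$ and $M = B(K) \rtimes_\be S \cong B(K')$. Concretely, the hypothesis that $\Om$ factors through $T \times \That$ as the canonical pairing $\vphi'(y)\overline{\vphi(y')}$ is exactly the condition that makes $\Om$ a "symplectic" form, so that $S$ admits a Heisenberg-type irreducible projective representation $W : S \to \cU(K)$ with $W_x W_{x'} = \Om(x,x') W_{x+x'}$; the doubling hypothesis on $T$ guarantees nondegeneracy of this pairing and thus irreducibility, making $M := B(K)$ with $\be_x = \Ad W_x$ a factor. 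The $G$-invariance of $\Om$ then lets $\be$ extend to an action $\be$ of $S \rtimes_\al G$ on $B(K)$, which is the input to Lemma \ref{lem.generic-1}.

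First I would make this Heisenberg representation precise: using $\theta$ to identify $S \cong T \times \That$, set $K = L^2(T)$ and let $W_{(y,\vphi)}$ act by the translation-by-$y$ times multiplication-by-$\vphi$ (a Weyl operator), so that $\Ad W$ defines an action $\be$ of $S$ on $B(K)$ with the prescribed $2$-cocycle, and $B(K) = B(K) \rtimes_\be S$ via Takesaki/Stone--von~Neumann. Next I would verify that $\al$ lifts: since $\Om(\al_g(x),\al_g(y)) = \Om(x,y)$, the unitaries $W_{\al_g(x)}$ implement the same cocycle as $W_x$, so by irreducibility there is a projective representation of $G$ on $K$ intertwining them, giving the action $\be$ of $S \rtimes_\al G$ on $A = B(K)$. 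Then I would apply Lemma \ref{lem.generic-1} with this data: it yields a continuous action $\cG \actson^\gamma M$ with $M \cong B(K)$ and $M \rtimes \cG \cong (A \rtimes_\be G) \ovt B(L^2(S))$, together with $L(\cG) \cong L^\infty(S) \rtimes_\al G$.

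The computation of the two centers now splits. For $L(\cG)$, Lemma \ref{lem.generic-1} gives $L(\cG) \cong L^\infty(S) \rtimes_\al G$ directly, so $\cZ(L(\cG)) \cong \cZ(L^\infty(S) \rtimes_\al G)$ is immediate. For $L_\om(\cG)$, since $M \cong B(K)$ is a factor and the action $\gamma$ is inner (each $\gamma_g$ is implemented by $W_g$ on $B(K)$), Lemma \ref{lem.twisted} applies: it produces the Borel $2$-cocycle $\om \in Z^2(\cG,\T)$ and the isomorphism $M \rtimes \cG \cong M \ovt L_\om(\cG)$. Comparing with $M \rtimes \cG \cong (A \rtimes_\be G) \ovt B(L^2(S))$ from Lemma \ref{lem.generic-1}, and using that $A = B(K)$ so that $A \rtimes_\be G \cong B(K) \ovt L(G)$ (again because $\be|_G$ is inner, by a second application of Lemma \ref{lem.twisted} with trivial resulting cocycle, as the $\al$-invariance forces the $G$-part to contribute an honest representation up to the same scalar), one tensors out the factors $B(K)$ and $B(L^2(S))$ to conclude $L_\om(\cG) \cong L(G)$, whence $\cZ(L_\om(\cG)) \cong \cZ(L(G))$.

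The main obstacle I anticipate is bookkeeping the $2$-cocycles carefully enough that the final identifications $A \rtimes_\be G \cong B(K) \ovt L(G)$ and $M \rtimes \cG \cong M \ovt L_\om(\cG)$ come out with the \emph{untwisted} $L(G)$ on the right while the twist is entirely absorbed into $\om$ on the $\cG$-side. Lemma \ref{lem.twisted} is engineered exactly for this—it converts an inner action on a factor into a tensor product with a twisted group algebra—but one must check that the cocycle arising from the $G$-representation on $K$ is a coboundary (so that $L(G)$ appears untwisted), which follows from the $\al$-invariance of $\Om$ together with irreducibility of $W$. The remaining verifications—continuity of all actions, separability of the preduals, and that $y \mapsto 2y$ being a homeomorphism is precisely what is needed for the Weyl system to be irreducible (equivalently, for the pairing $T \times \That \to \T$ to have no kernel after the symplectic doubling)—are routine, and I would only flag rather than grind through them.
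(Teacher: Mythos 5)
Your overall architecture is the same as the paper's: build an inner action $\be$ of $S \rtimes_\al G$ on a type~I factor $A$, feed it into Lemma~\ref{lem.generic-1}, check that $M = A \rtimes_\be S$ is again a type~I factor, and use Lemma~\ref{lem.twisted} to read off $\cZ(L_\om(\cG)) \cong \cZ(A \rtimes_\be G) \cong \cZ(L(G))$. The gap is in how you produce $\be$, and it is fatal as written. A translation--modulation Weyl system on $L^2(T)$, say $(W_{(y,\vphi)}\xi)(z) = \vphi(z)\xi(z+y)$, satisfies $W_{(y,\vphi)}W_{(y',\vphi')} = \vphi'(y)\,W_{(y+y',\vphi\vphi')}$, so its $2$-cocycle $c((y,\vphi),(y',\vphi')) = \vphi'(y)$ has antisymmetrization $c(x,x')\overline{c(x',x)} = \Om(x,x')^{\pm 1}$ (the sign depending on your ordering convention), whereas $\Om$ has antisymmetrization $\Om(x,x')\overline{\Om(x',x)} = \Om(x,x')^{2}$. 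Cohomologous cocycles on an abelian group have equal antisymmetrizations (coboundaries are symmetric), so $c$ is not even cohomologous to $\Om$ unless $\Om^3 = 1$: no Weyl system is an $\Om$-projective representation, and your claim that $\Ad W$ gives an inner action of $S$ ``with the prescribed $2$-cocycle'' is false. This matters because the only invariance you are given is that of $\Om$ itself; the Heisenberg cocycle $c$ is \emph{not} $\al$-invariant (only its antisymmetrization is), so your next claim, that $W_{\al_g(x)}$ ``implement the same cocycle as $W_x$'', also fails. To repair it you would need Kleppner's similarity theorem (equal antisymmetrizations imply cohomologous) to correct $W_{\al_g(\cdot)}$ by a Borel coboundary before invoking Stone--von Neumann uniqueness; and even then the intertwiners $V_g$ so obtained only satisfy $\Ad(V_g V_h V_{gh}^*)(W_x) = \nu_{g,h}(x) W_x$ for characters $\nu_{g,h} \in \Shat$, so $g \mapsto \Ad V_g$ need not be a homomorphism. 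You therefore obtain neither the honest continuous action of $S \rtimes_\al G$ that Lemma~\ref{lem.generic-1} takes as input, nor the triviality of the $2$-cocycle of the $G$-part, which you need for $A \rtimes_\be G \cong B(K) \ovt L(G)$ with the \emph{untwisted} $L(G)$; without that, your argument only yields $\cZ(L_\om(\cG)) \cong \cZ(L_\mu(G))$ for some unidentified $\mu \in Z^2(G,\T)$.

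The paper removes all of these obstacles with one device your proposal is missing: since $\Om$ itself (not merely its antisymmetrization) is a $G$-invariant bicharacter, the formula $\eta((x,g),(y,h)) = \Om(x,\al_g(y))$ is a continuous $2$-cocycle on the whole group $S \rtimes_\al G$; it restricts to $\Om$ on $S \times S$ and is identically $1$ on $G \times G$. Taking \emph{any} $\overline{\eta}$-projective representation $\pi$ of $S \rtimes_\al G$ (no irreducibility is needed anywhere) gives the honest action $\be = \Ad \pi$ on $A = B(H)$, and $A \rtimes_\be G \cong B(H) \ovt L(G)$ comes out untwisted for free. The price of working with $\Om$ rather than the Heisenberg cocycle is that type~I-ness of $M \cong B(H) \ovt L_\Om(S)$ is no longer literal Stone--von Neumann, and this is exactly where the hypothesis that $y \mapsto 2y$ is a homeomorphism of $T$ enters: because the antisymmetrization of $\Om$ is the \emph{square} of the canonical pairing, one gets $L_\Om(S) \cong L^\infty(T) \rtimes_\zeta T$ where $\zeta$ is translation by $2y$, and this crossed product is $B(L^2(T))$ precisely because doubling is an automorphism of $T$. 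Your closing remark that the doubling hypothesis ensures ``nondegeneracy of the pairing'' misreads its role: the canonical pairing of $T \times \That$ is always nondegenerate; it is the squared pairing that the hypothesis rescues.
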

\begin{proof}
The formula $\eta((x,g),(y,h)) = \Om(x,\al_g(y))$ defines a continuous $2$-cocycle on the semidirect product group $S \rtimes_\al G$. Choose an $\overline{\eta}$-projective representation $\pi : S \rtimes_\al G \to \cU(H)$ on a separable Hilbert space $H$. Write $A = B(H)$ and define the inner action $\be_g = \Ad \pi(g)$ of $S \rtimes_\al G$ on $A$. We apply Lemma \ref{lem.generic-1}. We thus find a continuous action $\gamma$ of $\cG$ on $M = A \rtimes_\be S$ such that
\begin{equation}\label{eq.equal-centers}
\cZ(M \rtimes_\gamma \cG) \cong \cZ(A \rtimes_\be G) \; .
\end{equation}
Below we prove that $M$ is a type~I factor $B(K)$. So, $\gamma$ is an inner action that, by Lemma \ref{lem.twisted}, gives rise to a Borel $2$-cocycle $\om$ on $\cG$ such that
$$M \rtimes_\gamma \cG \cong B(K) \ovt L_\om(\cG) \; .$$
By Lemma \ref{lem.twisted} and because the $2$-cocycle $\eta$ is trivial on $G \times G$, we have that $A \rtimes_\be G \cong B(K) \ovt L(G)$. It then follows from \eqref{eq.equal-centers} that $\cZ(L_\om(\cG)) \cong \cZ(L(G))$. In Lemma \ref{lem.generic-1}, we already observed that $L(\cG) \cong L^\infty(S) \rtimes_\al G$.

It remains to prove that $M$ is a type I factor. Since $\be$ is an inner action and $A = B(H)$, it follows from Lemma \ref{lem.twisted} that $M \cong B(H) \ovt L_\eta(S) = B(H) \ovt L_\Om(S)$. Consider the action $T \actson^\zeta L^\infty(T) : (\zeta_y(F))(z) = F(2y+z)$. Since we assumed that $y \mapsto 2y$ is an isomorphism of $T$ onto $T$, we get that $L^\infty(T) \rtimes_\zeta T \cong B(L^2(T))$. Denote by $(\lambda_\Om(x))_{x \in S}$ the canonical generating unitaries of $L_\Om(S)$. Denote the neutral elements of $T$ and $\That$ by $0$ and $1$, resp. By definition,
$$\lambda_\Om(\theta(y,1)) \, \lambda_\Om(\theta(0,\vphi)) = \vphi(y) \, \lambda_\Om(\theta(y,\vphi)) = \vphi(y)^2 \, \lambda_\Om(\theta(0,\vphi)) \, \lambda_\Om(\theta(y,1)) \; .$$
There is thus a unique $*$-isomorphism $\Theta : L^\infty(T) \rtimes_\zeta T \to L_\Om(S)$ satisfying
$$\Theta(\vphi) = \lambda_\Om(0,\vphi) \quad\text{and}\quad \Theta(\lambda_y) = \lambda_\Om(y,1) \quad\text{for all $\vphi \in \That \subset L^\infty(T)$ and $y \in T$.}$$
It follows that $L_\Om(S) \cong B(L^2(T)$, so that $M$ is a type I factor.
\end{proof}

To prove Theorem \ref{thm.main}, it suffices to give examples satisfying the assumptions of Lemma \ref{lem.generic-2} such that the action $G \actson^\al S$ is essentially free and ergodic, while the center of $L(G)$ is diffuse. For this, we use the following lemma.

\begin{lemma}\label{lem.generic-3}
Let $R$ be a lcsc ring that is not discrete and that has no zero divisors. Assume that $(R,+)$ admits a continuous character $\psi : R \to \T$ such that the bicharacter $(x,y) \mapsto \psi(xy)$ implements an isomorphism $R \cong \widehat{R}$.

Let $\Gamma$ be a countable subgroup of $\SL_2(R)$ and consider the canonical action $\Gamma \actson^\al R^2$. Define the bicharacter
$$\Om : R^2 \times R^2 \to \T : \Om((x,x'),(y,y')) = \psi(xy' - x' y) \; .$$
Assume that the action $\Gamma \actson R^2$ is ergodic.

Then the semidirect product group $\cG = {\widehat{R}}^2 \rtimes_{\alhat} \Gamma$ has a factorial group von Neumann algebra $L(\cG)$ and admits a Borel $2$-cocycle $\om \in Z^2(\cG,\T)$ such that $\cZ(L_\om(\cG)) \cong \cZ(L(\Gamma))$.
\end{lemma}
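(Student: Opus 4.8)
The plan is to read this off from Lemma \ref{lem.generic-2}, applied to the discrete (hence lcsc) group $G = \Gamma$, the lcsc abelian group $S = R^2$, the canonical action $\Gamma \actson^\al R^2$ and the bicharacter $\Om$. Since $\widehat{R^2} \cong \widehat{R}^2$, the group $\cG = \widehat{R}^2 \rtimes_{\alhat} \Gamma$ is precisely the one occurring there, so that lemma will hand us a Borel $2$-cocycle $\om$ with $\cZ(L_\om(\cG)) \cong \cZ(L(\Gamma))$, together with the identification $\cZ(L(\cG)) \cong \cZ(L^\infty(R^2) \rtimes_\al \Gamma)$ coming from Lemma \ref{lem.generic-1}. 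Everything then reduces to two tasks: checking the hypotheses of Lemma \ref{lem.generic-2}, and proving that $L^\infty(R^2) \rtimes_\al \Gamma$ is a factor.

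For the hypotheses of Lemma \ref{lem.generic-2}, continuity of $\Om$ is immediate from continuity of $\psi$ and of multiplication in $R$. The $\al$-invariance $\Om(\al_g v, \al_g w) = \Om(v,w)$ is the statement that $\SL_2(R)$ preserves the symplectic form $(v,w) \mapsto v_1 w_2 - v_2 w_1 = \det[\,v \mid w\,]$, which follows from $\det(g\,[\,v \mid w\,]) = \det(g)\det[\,v \mid w\,] = \det[\,v \mid w\,]$ using $\det g = 1$ (and commutativity of $R$). For the factorisation hypothesis I would take $T = R$ and use the self-duality $\Phi : R \to \widehat{R}$, $\Phi(z) = \psi(z\,\cdot\,)$, to identify $\widehat{T} = \widehat{R} \cong R$; with $\theta(y,\vphi) = (y, \Phi^{-1}(\vphi))$ one finds, for $\vphi = \Phi(z)$ and $\vphi' = \Phi(z')$,
\[
\Om(\theta(y,\vphi),\theta(y',\vphi')) = \psi(y z' - z y') = \psi(z' y)\,\overline{\psi(z y')} = \vphi'(y)\,\overline{\vphi(y')}.
\]
The remaining requirement, that $y \mapsto 2y$ be a homeomorphism of $T = R$, is where the oddness of $p$ enters: it holds because $2$ is invertible in $R$, whereas in characteristic $2$ the form $\Om$ would be symmetric and the construction would degenerate.

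The real content is factoriality of $L^\infty(R^2) \rtimes_\al \Gamma$. As $\Gamma$ is countable and the action is assumed ergodic, the standard computation of the centre of a group-measure-space algebra reduces this to showing that $\Gamma \actson R^2$ is essentially free. Fix $g \in \Gamma \setminus \{e\}$; then $g - 1 \in M_2(R)$ is nonzero, so it has a nonzero row $(a,b)$, and the fixed-point set $F_g = \{v \in R^2 : (g-1)v = 0\}$ is contained in the closed subgroup $\ker\phi$, where $\phi : R^2 \to R$, $\phi(x,x') = ax + bx'$. A closed subgroup of $R^2$ is Haar-null unless it is open, so it suffices to see that $\ker\phi$ is not open. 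If it were, then $R^2/\ker\phi$ would be a discrete, hence countable, group, forcing $\operatorname{im}\phi$ to be countable; but with $a \neq 0$ (say), $\operatorname{im}\phi \supseteq aR$, which is uncountable, since $R$ is non-discrete (hence uncountable) and multiplication by $a$ is injective as $R$ has no zero divisors. This contradiction shows that $\ker\phi$, and hence $F_g$, is null. As $\Gamma$ is countable, $\bigcup_{g \neq e} F_g$ is null and the action is essentially free.

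Combining essential freeness with ergodicity makes $L^\infty(R^2) \rtimes_\al \Gamma$ a factor, so $\cZ(L(\cG))$ is trivial and $L(\cG)$ is a factor, while $\cZ(L_\om(\cG)) \cong \cZ(L(\Gamma))$ was already supplied by Lemma \ref{lem.generic-2}. I expect the substance to lie in two places: the essential-freeness step, where the hypothesis that $R$ has no zero divisors is exactly what converts the algebraic nondegeneracy of $g-1$ into the measure-theoretic statement that each fixed-point set is null; and the verification that $R^2$ with $\Om$ fits the template of Lemma \ref{lem.generic-2}, where the self-duality $R \cong \widehat{R}$ furnishes the Lagrangian splitting $R^2 = R \times \widehat{R}$ and where one must not overlook the $2$-divisibility of $R$ (the role of the odd prime).
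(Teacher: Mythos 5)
Your proposal is correct and follows essentially the same route as the paper: reduce to Lemma \ref{lem.generic-2} via the splitting $R^2 \cong R \times \widehat{R}$ furnished by the self-duality $\psi$, verify the $\SL_2(R)$-invariance of $\Om$, prove essential freeness of $\Gamma \actson R^2$ from the no-zero-divisor hypothesis, and combine with ergodicity to obtain factoriality of $L^\infty(R^2) \rtimes_\al \Gamma \cong L(\cG)$, while $\cZ(L_\om(\cG)) \cong \cZ(L(\Gamma))$ comes from Lemma \ref{lem.generic-2}. The only minor difference is in showing the sets $\{(x,x') : ax+bx'=0\}$ are null: the paper uses Fubini together with the fact that singletons are null in the non-discrete group $R$, whereas you use the fact that a closed, non-open subgroup of $R^2$ is Haar-null plus a cardinality argument; both are valid.
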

\begin{proof}
By a direct computation, $\Om(\al_g(x,x'),\al_g(y,y')) = \Om((x,x'),(y,y'))$ for all $g \in \Gamma \subset \SL_2(R)$.

Note that the action $\Gamma \actson R^2$ is essentially free. To prove this statement, since $\Gamma$ is countable, it suffices to prove that for every matrix $A \in \SL_2(R)$ with $A \neq I_2$, the set $\{x \in R^2 \mid A \cdot x = x\}$ has Haar measure zero. Taking a nonzero row of $A-I_2$, it suffices to prove that for all $a,b \in R$ that are not both equal to zero, the set $D = \{(x,x') \in R^2 \mid ax+bx' = 0\}$ has Haar measure zero. Assume that $a \neq 0$. For every fixed $x' \in R$, because $R$ has no zero divisors, there is at most one $x \in R$ such that $(x,x') \in D$. Since $R$ is not discrete, singletons have Haar measure zero and the result follows from the Fubini theorem.

Since we assumed that $\Gamma \actson R^2$ is ergodic, it follows that $L^\infty(R^2) \rtimes_\al \Gamma$ is a factor. So, $L(\cG)$ is a factor.

By assumption, we can view $R^2$ as $R \times \widehat{R}$ such that $\Om$ is of the form required by Lemma \ref{lem.generic-2}. Since $2 \in R$ is invertible, $x \mapsto 2 x$ is a homeomorphism of $R$ onto $R$. The conclusion thus follows from Lemma \ref{lem.generic-2}.
\end{proof}

For the following lemma, recall the notion of a restricted direct product of locally compact groups: given a sequence of locally compact groups $G_k$ with compact open subgroups $K_k$, the set $\prod'_{k \in \N} (G_k,K_k)$ is defined as the set of $(g_k)_{k \in \N}$ with $g_k \in G_k$ for all $k \in \N$ and $g_k \in K_k$ for $k$ sufficiently large. With pointwise operations and the product topology, this is a locally compact group.

\begin{lemma}\label{lem.examples}
In the following cases, all assumptions of Lemma \ref{lem.generic-3} are satisfied.
\begin{enumlist}
\item $R = \R$ with $\psi(x) = \exp(i x)$ and $\Gamma = \SL_2(\Z)$ or $\Gamma = \SL_2(\Q)$. In both cases, $\cZ(L(\Gamma))$ is $2$-dimensional.
\item $R = \R^n$ (with componentwise multiplication) with $\psi(x) = \exp(i (x_1 + \cdots + x_n))$ and $\Gamma = \SL_2(\Z)^n$ or $\Gamma = \SL_2(\Q)^n$. In both cases, $\cZ(L(\Gamma))$ is $2^n$-dimensional.
\item\label{lem.examples.3} For an odd prime $p$, take
\begin{alignat*}{2}
R = {\prod_{k \in \N}}' (\Q_p,\Z_p) \quad &\text{with}\quad &&\psi(x) = \psi_0\Bigl(\sum_{k \in \N} (x_k + \Z_p)\Bigr) \\
&\text{and}\quad &&\psi_0 : \Q_p / \Z_p \to \T : \psi_0(p^{-n} r + \Z_p) = \exp(2\pi i p^{-n} r) \\
&&&\text{for all $n \in \N$, $r \in \Z$,}
\end{alignat*}
and $\Gamma = \SL_2(\Q)^{(\N)}$. Now the center of $L(\Gamma)$ is diffuse.
\end{enumlist}
\end{lemma}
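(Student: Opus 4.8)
The plan is to treat all three items in parallel, checking for each the hypotheses of Lemma~\ref{lem.generic-3} — namely that $R$ is a non-discrete lcsc ring, that $(x,y)\mapsto\psi(xy)$ identifies $R$ with $\widehat R$, that $\Gamma$ is a countable subgroup of $\SL_2(R)$, and that $\Gamma\actson R^2$ is ergodic — and then computing $\cZ(L(\Gamma))$. Countability of $\Gamma$ is immediate in each case, and $R$ is visibly non-discrete and lcsc (a restricted product of the lcsc rings $\Q_p$ in case (iii)). One point needs care: Lemma~\ref{lem.generic-3} assumes $R$ has no zero divisors, which holds for $R=\R$ but fails for $R=\R^n$ and $R=\prod'_k(\Q_p,\Z_p)$. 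However, that hypothesis is used there \emph{only} to establish essential freeness of $\Gamma\actson R^2$, and the argument survives verbatim in the two product cases: given $(a,b)\in R^2\setminus\{0\}$, some coordinate $(a_j,b_j)$ is a nonzero vector over the field $\R$ (resp.\ $\Q_p$), so $a_jx_j+b_jx_j'=0$ cuts out a null subset of that coordinate plane and, by Fubini, $\{(x,x')\mid ax+bx'=0\}$ is Haar-null. With essential freeness in place, the conclusion of Lemma~\ref{lem.generic-3} applies in all three cases once ergodicity is checked.

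For the self-pairing, cases (i) and (ii) are the standard self-dualities $e^{ixy}$ of $\R$ and $e^{i\sum_j x_jy_j}$ of $\R^n$. For case (iii) I would first record that $\psi_0$ is the standard additive character $\chi_p$ of $\Q_p/\Z_p$, for which the local pairing $(a,b)\mapsto\chi_p(ab)$ identifies $\Q_p\cong\widehat{\Q_p}$ with $\Z_p$ self-dual, i.e.\ $\Z_p^\perp=\Z_p$. Since only finitely many $x_k$ lie outside $\Z_p$, one has $\psi(x)=\prod_k\chi_p(x_k)$, so $(x,y)\mapsto\psi(xy)$ is the restricted product of the local pairings; the general duality of restricted products then gives $\widehat R\cong\prod'_k(\widehat{\Q_p},\Z_p^\perp)=\prod'_k(\Q_p,\Z_p)=R$, implemented by $\psi$.

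The ergodicity of $\Gamma\actson R^2$ is the technical core. There are two building blocks: (a) $\SL_2(\Z)\actson\R^2$ is ergodic, by Moore's ergodicity theorem, since $\R^2\setminus\{0\}\cong\SL_2(\R)/N$ with $N$ the non-compact unipotent stabilizer of a vector and $\SL_2(\Z)$ a lattice; and (b) $\SL_2(\Q)\actson\Q_p^2$ is ergodic, because $\SL_2(\Q)$ is dense in $\SL_2(\Q_p)$, which acts transitively, hence ergodically, on the co-null orbit $\Q_p^2\setminus\{0\}$ — a $\Gamma$-invariant $f\in L^\infty$ is then $\SL_2(\Q_p)$-invariant by continuity of $g\mapsto g\cdot f$ tested against $L^1$-functions. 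Case (i) is (a) (and $\SL_2(\Q)\supseteq\SL_2(\Z)$ handles the other group, since a subgroup acting ergodically forces the overgroup to), and case (ii) follows from (a) and the elementary fact that a finite product of ergodic actions, one per direct factor, is ergodic. For case (iii) I expect the main obstacle: the infinite restricted product. Here I would argue by a tail/Kolmogorov estimate. Writing $X=\prod'_k(\Q_p^2,\Z_p^2)$ and using that $\bigoplus_{k\le n}\SL_2(\Q)$ acts ergodically on $\prod_{k\le n}\Q_p^2$ (by (b) and the finite-product fact), any $\Gamma$-invariant $f\in L^\infty(X)$ satisfies $f(x)=f_n(x_{>n})$ for every $n$, so $f$ is tail-measurable. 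Restricted to the compact open subgroup $Y=\prod_k\Z_p^2$ with its product probability measure, Kolmogorov's $0$--$1$ law forces $f|_Y$ to equal a constant $c$ a.e.; since the identity $f(x)=f_n(x_{>n})$ reads off $f$ from its tail coordinates and every coset $v+Y$ shares the tail of $Y$, the value $c$ propagates to all of $X=\bigsqcup_v(v+Y)$, proving ergodicity.

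Finally, to identify $\cZ(L(\Gamma))$ I would use that it is the weak closure of the span of the class sums over finite conjugacy classes, so that $\cZ(L(\Gamma))=L(\Gamma\fc)$ whenever the FC-center $\Gamma\fc$ is central. Since $\PSL_2(\Z)\cong\Z/2*\Z/3$ and $\PSL_2(\Q)$ (infinite and simple) are ICC, the FC-center of $\SL_2(\Z)$ and of $\SL_2(\Q)$ is exactly the group center $\{\pm I\}\cong\Z/2$; and the FC-center of a direct sum is the direct sum of the FC-centers. Hence $\Gamma\fc\cong\Z/2$, $(\Z/2)^n$, and $(\Z/2)^{(\N)}$ in the three cases, all central, giving $\cZ(L(\Gamma))\cong L(\Z/2)\cong\C^2$, $L((\Z/2)^n)\cong\C^{2^n}$, and $L((\Z/2)^{(\N)})\cong L^\infty(\prod_{\N}\Z/2)$. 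The first two are $2$- and $2^n$-dimensional, while the last, carrying the non-atomic Haar (product) measure, is diffuse, as claimed.
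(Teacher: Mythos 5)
Your proof is correct and, in its core, follows the same route as the paper: Moore's ergodicity theorem for $\SL_2(\Z) \actson \R^2$, ergodicity of finite products of ergodic actions for case (ii), a tail-triviality argument over the restricted product for case (iii), and the computation of $\cZ(L(\Gamma))$ via finite conjugacy classes. The one genuine difference is a point in your favour: you noticed that the hypothesis of Lemma \ref{lem.generic-3} that $R$ has no zero divisors \emph{fails} for $R = \R^n$ ($n \geq 2$) and for $R = {\prod_{k}}'(\Q_p,\Z_p)$, since both have componentwise zero divisors, so the literal statement of Lemma \ref{lem.examples} (``all assumptions of Lemma \ref{lem.generic-3} are satisfied'') is inaccurate in cases (ii) and (iii). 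The paper's proof glosses over this entirely (it declares that only ergodicity and the centers need checking), whereas you correctly observed that the no-zero-divisor hypothesis is used in Lemma \ref{lem.generic-3} only to establish essential freeness of $\Gamma \actson R^2$, and that essential freeness can be re-derived coordinatewise: a nonzero row $(a,b)$ of $A - I_2$ has some coordinate pair $(a_j,b_j) \neq (0,0)$ over the field $\R$ resp.\ $\Q_p$, the set $\{a_j x_j + b_j x_j' = 0\}$ is null in that coordinate plane, and Fubini finishes the argument. This is exactly the right repair, and it matters because Theorem \ref{thm.main} rests on case (iii). The remaining differences are cosmetic: for $\SL_2(\Q) \actson \R^2$ you use that ergodicity passes from a subgroup to the ambient group (the paper instead uses density of $\SL_2(\Q)$ in $\SL_2(\R)$ together with essential transitivity); in case (iii) you invoke Kolmogorov's zero--one law on the compact open subgroup $\prod_k \Z_p^2$ and propagate the constant over its countably many cosets, while the paper runs the same tail argument on the exhaustion $X_n = (\Q_p^2)^n \times \prod_{k>n}\Z_p^2$, leaving the zero--one law implicit; and you verify the self-duality $R \cong \widehat{R}$ implemented by $\psi$ explicitly, which the paper treats as standard.
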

\begin{proof}
In each of the three cases, we only need to prove the ergodicity of $\Gamma \actson R^2$ and determine the center of $L(\Gamma)$.

(i) To prove the ergodicity of $\SL_2(\Z) \actson \R^2$, we define the closed subgroup $P \subset \SL_2(\R)$ of matrices of the form $\bigl(\begin{smallmatrix} 1 & x \\ 0 & 1\end{smallmatrix}\bigr)$. Identifying $\R^2 \setminus \{(0,0)\}$ with $\SL_2(\R) / P$, we get that
$$L^\infty(\R^2)^{\SL_2(\Z)} \cong L^\infty(\SL_2(\R) / \SL_2(\Z))^P \; .$$
The right hand side equals $\C 1$ by Moore's ergodicity theorem, because $\SL_2(\Z)$ is a lattice in $\SL_2(\R)$ and $P$ is noncompact.

The action $\SL_2(\R) \actson \R^2$ is essentially transitive and thus certainly ergodic. Then also the action of the dense subgroup $\SL_2(\Q)$ on $\R^2$ is ergodic.

Both in $\SL_2(\Z)$ and in $\SL_2(\Q)$, every element different from $I_2$ and $-I_2$ has an infinite conjugacy class, while $\pm I_2$ are central. So, the center of $L(\SL_2(\Z))$ and of $L(\SL_2(\Q))$ is $2$-dimensional.

(ii) When $\Gamma \actson \R^2$ is ergodic, also the product action of $\Gamma \times \cdots \times \Gamma$ on $\R^2 \times \cdots \times \R^2$ is ergodic. Point (ii) thus follows from point (i).

(iii) The action $\SL_2(\Q_p) \actson \Q_p^2$ is essentially transitive and thus, ergodic. Since $\SL_2(\Q)$ is dense in $\SL_2(\Q_p)$, also the action $\SL_2(\Q) \actson \Q_p^2$ is ergodic. For every $n \in \N$, consider the $n$-fold products $\Gamma_n = \SL_2(\Q)^n$ and $S_n = (\Q_p^2)^n$. Then also $\Gamma_n \actson S_n$ is ergodic. Write $K_n = \prod_{k = n+1}^\infty \Z_p^2$ and view $X_n := S_n \times K_n$ as a subset of $R^2$. By definition, $X_n$ is an increasing sequence of subsets of $R^2$ whose union equals $R^2$.

Take a $\Gamma$-invariant element $F \in L^\infty(R^2)$. For every $n \in \N$, the restriction of $F_n$ to $X_n$ is $(\Gamma_n \times \id)$-invariant. By the ergodicity of $\Gamma_n \actson S_n$, we find $H_n \in L^\infty(K_n)$ such that $F_n = 1 \ot H_n$ a.e. Fix $n \in \N$. For every $m > n$, the function $F_n$ is the restriction of the function $F_m$. It follows that $H_n = 1^{m-n} \ot H_m$ a.e.\ for every $m > n$. This implies that $H_n$ is essentially constant. So, $F_n$ is essentially constant for every $n$. Since $\bigcup_n X_n = R^2$, all these constants must be the same and $F$ is essentially constant.

Now the infinite subgroup $\{\pm I_2\}^{(\N)}$ lies in the center of $\Gamma$. So the center of $L(\Gamma)$ is diffuse.
\end{proof}

\begin{proof}[Proof of Theorem \ref{thm.main}]
It suffices to combine Lemma \ref{lem.generic-3} with Lemma \ref{lem.examples.3}, and to observe that the actions of $\SL(2,R)$ on $R^2$ and ${\widehat{R}}^2$ are isomorphic.
\end{proof}

\section{Proof of Proposition \ref{prop.main-cor}}

\begin{proof}[Proof of Proposition \ref{prop.main-cor}]
Assume that $G$ is discrete and that both $L(G)$ and $A$ are factors. We have to prove that $A \rtimes_\al G$ is a factor. Define $G_0 < G$ as the normal subgroup of $g \in G$ for which $\al_g$ is an inner automorphism of $A$. For every $g \in G_0$, choose a unitary $v_g \in A$ such that $\al_g = \Ad v_g$. We make this choice such that $v_e = 1$. Since $A$ is a factor, $v_g$ is uniquely determined up to multiplication by a scalar of modulus $1$. Thus, $v_g v_h = \om(g,h) v_{gh}$ for all $g,h \in G_0$, where $\om \in Z^2(G_0,\T)$ is a $2$-cocycle.

Write $M = A \rtimes_\al G$, generated by $A$ and unitaries $(u_g)_{g \in G}$. We also define $B = A \rtimes_\al G_0$. We denote by $E_B : M \to B$ the canonical faithful normal conditional expectation satisfying $E_B(u_g) = 0$ for all $g \in G \setminus G_0$. We similarly denote by $E_A : M \to A$ the canonical faithful normal conditional expectation satisfying $E_A(u_g) = 0$ for all $g \in G \setminus \{e\}$.

We start by proving that $A' \cap M \subset B$. Fix $d \in A' \cap M$. Since $A \subset B$, also $E_B(d) \in A' \cap M$. Replacing $d$ by $d - E_B(d)$, we may thus assume that $E_B(d) = 0$ and prove that $d = 0$. Fix $g \in G \setminus G_0$. Then for all $a \in A$,
$$a E_A(d u_g^*) = E_A(a d u_g^*) = E_A(d a u_g^*) = E_A(d u_g^* \al_g(a)) = E_A(d u_g^*) \al_g(a) \; .$$
Since $A$ is a factor and $\al_g$ is an outer automorphism, it follows that $E_A(d u_g^*) = 0$ for all $g \in G \setminus G_0$. When $g \in G_0$, since $E_A = E_A \circ E_B$, we have that
$$E_A(d u_g^*) = E_A(E_B(d u_g^*)) = E_A(E_B(d) u_g^*) = 0 \; .$$
We thus conclude that $E_A(d u_g^*) = 0$ for all $g \in G$. Then also $E_A(d u_g^* a) = E_A(d u_g^*) a = 0$ for all $g \in G$ and $a \in A$. It follows that $E_A(d b) = 0$ for all $b \in M$. So, $E_A(dd^*) = 0$ and it follows that $d = 0$.

Denote by $(W_h)_{h \in G_0}$ the canonical unitaries in $L_{\overline{\om}}(G_0)$. In the case of discrete groups, the proof of Lemma \ref{lem.twisted} makes no use of the second countability assumption. So, there is a unique $*$-isomorphism $\theta : A \ovt L_{\overline{\om}}(G_0) \to B$ satisfying $\theta(a \ot W_h) = a v_h^* u_h$ for all $a \in A$ and $h \in G_0$.

Note that $G_0$ is a normal subgroup of $G$ and that, for every $g \in G$ and $h \in G_0$,
$$\al_{ghg^{-1}} = \al_g \circ \al_h \circ \al_g^{-1} = \al_g \circ (\Ad v_h) \circ \al_g^{-1} = \Ad \al_g(v_h) \; .$$
It follows that $v_{ghg^{-1}}$ must be a multiple of $\al_g(v_h)$. Define $\gamma_g(h) \in \T$ such that $\al_g(v_h)=\gamma_g(h) v_{ghg^{-1}}$. Then for all $g \in G$ and $h \in G_0$, we have that
\begin{align*}
u_g \, \theta(a \ot W_h) \, u_g^* &= u_g \, a v_h^* u_h \, u_g^* = \al_g(a) \al_g(v_h)^* u_{ghg^{-1}} = \overline{\gamma_g(h)} \, \al_g(a) v_{ghg^{-1}}^* u_{ghg^{-1}} \\
&= \theta(\al_g(a) \ot \overline{\gamma_g(h)}\, W_{ghg^{-1}}) \; .
\end{align*}
It follows that there is a unique action $\zeta$ of $G$ on $L_{\overline{\om}}(G_0)$ satisfying
$$\zeta_g(a \ot W_h) = \al_g(a) \ot \overline{\gamma_g(h)} \, W_{ghg^{-1}} \quad\text{and}\quad \theta \circ \zeta_g = (\Ad u_g) \circ \theta \quad\text{for all $g \in G$, $h \in G_0$.}$$

Now take $d \in \cZ(M)$. We proved above that $A' \cap M \subset B$. So, $d \in \cZ(B)$. Using the isomorphism $\theta$ and the assumption that $A$ is a factor, it follows that $d = \theta(1 \ot b)$ for some $b \in L_{\overline{\om}}(G_0)$ satisfying $\zeta_g(b) = b$ for all $g \in G$. Since $L(G)$ is a factor, the group $G$ has infinite conjugacy classes. A fortiori, the $G$-conjugacy class of every $h \in G_0 \setminus \{e\}$ is infinite. It follows that the only $(\zeta_g)_{g \in G}$-invariant elements of $L_{\overline{\om}}(G_0)$ are the scalars. We have proven that $b \in \C 1$ and thus $d \in \C 1$.

For the following reason, the proposition does not hold without the discreteness assumption. By Theorem \ref{thm.main} and Lemma \ref{lem.twisted}, we know that there exist locally compact groups $\cG$ and inner actions $\cG \actson^\al B(K)$ such that $L(\cG)$ is a factor, while $B(K) \rtimes_\al \cG$ has a diffuse center.
\end{proof}

\end{document}